\theoremstyle{plain}
\newtheorem{thm}{Theorem}[section]
\newtheorem{prop}[thm]{Proposition}
\theoremstyle{definition}
\newcommand{\Z}{\mathbb Z}
\newcommand{\R}{\mathbb R}
\newcommand{\Q}{\mathbb Q}
\newcommand{\C}{\mathbb C}
\newcommand{\sig}{\sigma}
\newcommand{\gam}{\gamma}
\newcommand{\Del}{\Delta}
\newcommand{\Lam}{\Lambda}
\newcommand{\Gam}{\Gamma}
\DeclareMathOperator{\PGL}{PGL}
\DeclareMathOperator{\GL}{GL}
\DeclareMathOperator{\Sp}{Sp}
\DeclareMathOperator{\Or}{O}
\DeclareMathOperator{\PO}{PO}
\newenvironment{pf}{\begin{proof}}{\end{proof}}
\title{New nonlinear hyperbolic groups}
\author{Richard D. Canary \\ \small{University of Michigan} \\ \small{\textsf{canary@umich.edu}} \and Matthew Stover \\ \small{Temple University}\\ \small{\textsf{mstover@temple.edu}}
\and Konstantinos Tsouvalas \\ \small{University of Michigan} \\ \small{\textsf{tsouvkon@umich.edu}}}
\date{\today}
\begin{document}

\maketitle

%%%%%%%%%%%%%%%%%%%%
\begin{abstract}
We construct nonlinear hyperbolic groups which are large, torsion-free, one-ended, and admit a finite $K(\pi,1)$.
Our examples are built from superrigid cocompact rank one lattices via
amalgamated free products and HNN extensions.
\end{abstract}
%%%%%%%%%%%%%%%%%%%%

%%%%%%%%%%%%%%%%%%%%
\section{Introduction}\label{sec:Intro}
%%%%%%%%%%%%%%%%%%%%

In this note, we construct new examples of nonlinear hyperbolic groups. For us, a group is ``nonlinear'' if it does not admit a faithful representation into $\GL_n(F)$ for $F$ any field.
As with previous constructions, our groups are built from superrigid cocompact lattices in rank $1$ Lie groups. Previous examples
were quotients of such lattices, small cancellation theory was used 
to show that the quotients are hyperbolic, and superrigidity results were used to see that they are nonlinear (see M.\ Kapovich \cite[\S 8]{MKap}).
Our construction involves simple HNN extensions and free products with amalgamation, and one can prove that the resulting groups
are hyperbolic using the Bestvina--Feighn Combination Theorem \cite{BF}. Our examples are large (i.e., have finite
index subgroups that surject a free group of rank two), torsion-free, one-ended, and admit a finite $K(\pi,1)$.

%%%%%%%%%%%%%%%%%%%%
\begin{thm}\label{mainthm}
For any $n\ge 0$, there exist large, torsion-free, one-ended, nonlinear hyperbolic groups that admit a finite $K(\pi,1)$,
have first betti number $n$, and surject a  free group of rank $n$.
\end{thm}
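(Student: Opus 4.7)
The plan is to take a torsion-free cocompact lattice $\Gam$ in a rank one Lie group for which full superrigidity applies, e.g., $\Sp(m,1)$ with $m\ge 2$ by Corlette and Gromov--Schoen, or the exceptional group $F_4^{-20}$. Such a $\Gam$ is word-hyperbolic, torsion-free, one-ended, has the closed locally symmetric space as a finite $K(\pi,1)$, and satisfies superrigidity over all local fields.

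To build the group $G$, I would form $n$ iterated HNN extensions of $\Gam$ (or, for $n=0$, an amalgamated product $\Gam_1 *_H \Gam_2$ of two such lattices, exploiting the fact that lattices with property (T) have finite abelianization) along carefully chosen infinite-index, quasiconvex, malnormal subgroups $H_i$ with gluing isomorphisms $\phi_i \colon H_i \to H_i'$. If the collection $\{H_i, H_i'\}$ is mutually sufficiently separated in the sense required by \cite{BF}, the Bestvina--Feighn Combination Theorem ensures $G$ is hyperbolic. Bass--Serre theory then gives that $G$ is torsion-free, one-ended (because each $H_i$ is infinite), and admits a finite $K(\pi,1)$ obtained by gluing the finite $K(\pi,1)$s of $\Gam$ and the $H_i$ along mapping cylinders. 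Each stable letter contributes a $\Z$-summand to $H_1(G)$, so $b_1(G)=n$ and quotienting $G$ by the normal closure of $\Gam$ yields a surjection onto $F_n$. Largeness is automatic once $n\ge 2$; for $n=0,1$ it can be arranged by choosing the construction so that a finite-index subgroup further decomposes with two independent stable letters surjecting $F_2$.

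The main difficulty is nonlinearity. Suppose for contradiction that $\rho\colon G \to \GL_d(F)$ is faithful for some field $F$. Restricted to $\Gam$, the combination of archimedean (Corlette, Gromov--Schoen) and non-archimedean superrigidity forces $\rho|_{\Gam}$ to essentially factor through the standard embedding of $\Gam$ into its ambient real algebraic group $L$, possibly after passing to a finite index subgroup and accounting for factors with bounded image. For each HNN factor, the image of the stable letter $t_i$ conjugates $\rho(H_i)$ onto $\rho(H_i')$ and realizes $\phi_i$ on the images; by superrigidity this conjugation should be achievable by an element of $L$ (or a controlled extension thereof). The technical heart of the proof is to choose $H_i$, $H_i'$, and $\phi_i$ so that no element of $L$ actually realizes $\phi_i$, for instance by taking $H_i$ and $H_i'$ to be quasi-Fuchsian or convex cocompact subgroups whose limit sets cannot be matched by any isometry of the symmetric space of $L$, or whose normalizer/commensurator structure in $L$ obstructs the required conjugation.

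The principal obstacle is balancing the two competing requirements: the $H_i$ must be tame enough (quasiconvex, malnormal, well-separated) for Bestvina--Feighn to produce hyperbolicity, while simultaneously being rigid enough in $\Gam$ that no Lie-theoretic conjugation can realize the gluing data $\phi_i$. Arranging both conditions at once, uniformly in $n$, is where the real work lies.
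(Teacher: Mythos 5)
Your overall architecture matches the paper's: start with a superrigid cocompact lattice $\Gam$ in $\Sp(m,1)$ or $\mathrm{F}_4^{(-20)}$, build HNN extensions (resp.\ an amalgam for $n=0$), get hyperbolicity from Bestvina--Feighn, and derive nonlinearity from superrigidity. The group-theoretic properties are handled essentially as in the paper, though your largeness claim for $n=0,1$ is only gestured at; the paper makes it concrete by placing the gluing data inside a normal subgroup $N\le\Gam$ of index at least $3$ and projecting onto $\Gam/N * \Z$ or $\Gam/N * \Gam/N$, which are virtually free of rank at least two.

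The genuine gap is in the nonlinearity argument, and you flag it yourself: you never specify the invariant that obstructs the gluing from being realized linearly, deferring it to ``where the real work lies.'' Your proposed route --- showing that no element of the ambient group $L$ can conjugate $\rho(H_i)$ to $\rho(H_i')$ realizing $\phi_i$, via limit sets or normalizer/commensurator structure --- is both harder than necessary and not obviously sound as stated: the element conjugating the images inside $\GL_d(F)$ is $\rho(t_i)$, which need not lie in (the image of) $L$, so you would have to control the full normalizer of $\rho(H_i)$ in $\GL_n(\R)$, not just in $L$. The paper's resolution is far more elementary. For $n\ge 1$ the edge groups are taken to be cyclic, generated by elements $\gamma_1,\gamma_2$ of \emph{different translation length} on $X$. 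Superrigidity (Theorem \ref{thm:Superrigidity1}, after reducing to characteristic zero and to $\R$) produces a $\rho$-equivariant \emph{totally geodesic} embedding $f:X\to Y_n$, which rescales all translation lengths by a single constant; but $\rho(\gamma_1)$ and $\rho(\gamma_2)$ are conjugate in $\GL_n(\R)$, hence have equal translation length on $Y_n$ --- a contradiction. For $n=0$ the two factors may embed with different scaling constants, so a single translation length does not suffice; the paper instead uses the \emph{ratio} of translation lengths of the two free generators of a malnormal quasiconvex $\Delta\cong F_2$, glued by an automorphism $\phi$ that changes this ratio, which is scale-invariant. Without this (or some equally concrete) invariant, your proof does not close.
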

%%%%%%%%%%%%%%%%%%%%

We present two related constructions, both of which begin with a cocompact torsion-free lattice $\Gam$ in $\Sp(m,1)$ (always with $m\ge 2$)
or $\mathrm{F}_4^{(-20)}$. As in M.\ Kapovich \cite{MKap}, our proofs rely crucially on Corlette's \cite{Corlette} and Gromov--Schoen's \cite{Gromov--Schoen} generalizations of the Margulis superrigidity theorem to lattices in these groups. 
In what follows, let $G$ be $\Sp(m,1)$ or $\mathrm{F}_4^{(-20)}$ and 
$X$ be the associated rank one symmetric space, i.e., quaternionic hyperbolic \hbox{$m$-space} or the Cayley hyperbolic plane.

In our first construction, we choose elements $\gamma_1$ and $\gamma_2$ of $\Gamma$ 
associated with primitive closed geodesics of different length in the locally symmetric space
$X/\Gamma$.
We consider the group $\Lambda_1$ obtained by taking the
HNN extension of $\Gamma$ such that the stable letter conjugates $\gamma_1$ to $\gamma_2$, i.e.,
\[
\Lambda_1=\langle \Gamma,t\ |\ t\gamma_1t^{-1}=\gamma_2\rangle.
\]
We use  superrigidity results  to show that if $\Lambda_1$ is linear,
then it admits a faithful representation $\rho$  into $\GL_n(\mathbb R)$ and there is  a totally geodesic embedding of $X$ into
the symmetric space $Y_n$ of $\GL_n(\R)$ which is equivariant with respect to the restriction
$\rho|_\Gamma$ of $\rho$ to $\Gamma$. Since the translation lengths of
$\rho(\gamma_1)$ and $\rho(\gamma_2)$ agree in $Y_n$ and $f$ is totally geodesic, the translation lengths
of $\gamma_1$ and $\gamma_2$ on $X$ agree, which gives a contradiction.
It follows that $\Lambda_1$ is nonlinear.
The Bestvina--Feighn combination theorem \cite{BF} implies that $\Lambda_1$ is hyperbolic, 
and it is clear that $\Lambda_1$ has first betti number $1$, 
has the same cohomological dimension as
$\Gamma$, admits a finite
$K(\pi,1)$, and is torsion-free. 
(In order to easily guarantee that $\Lambda_1$ is large, we will choose $\gamma_1$ and $\gamma_2$ to be elements of a normal,
finite index subgroup of $\Gamma$ of index at least 3.)
We will see that it  is easy to iterate this construction to produce examples with 
arbitrarily large first betti number.

Our second construction involves amalgamated free products and produces examples with first betti number zero.
Let $\Delta=\langle \alpha,\beta\rangle$ be a malnormal, infinite index subgroup of $\Gamma$ freely generated
by $\alpha$ and $\beta$. 
Let $\phi:\Delta\to\Delta$ be an isomorphism such that the ratio of the
translation lengths of $\alpha$ and $\beta$ is different than the ratio of the translation lengths of $\phi(\alpha)$
and $\phi(\beta)$. 
We then construct 
$$\Lam_0=\Gamma *_\phi \Gamma$$
 from two copies of $\Gamma$ by identifying $\Del$ in the first copy with $\Del$ in the second copy via
the isomorphism $\phi$.
We argue, as before, that if $\Lam_0$ is linear, then there is
a representation $\rho$ of $\Lambda_0$ into $\GL_n(\mathbb R)$ such that the restriction of
$\rho$ to each factor determines an equivariant totally geodesic embedding of $X$ into $Y_n$.
It follows that the ratio of the translation lengths of $\alpha$ and $\beta$
agrees with the ratios of the translation lengths of $\phi(\alpha)$ and $\phi(\beta)$, which we have 
disallowed. (In order to  establish that $\Lambda_0$ is large we will also assume that $\Delta$ is
contained in a normal subgroup of $\Gamma$ of finite index at least 3.)

We regard the main advantage of our new constructions to be their relative simplicity and flexibility. For example, if one were given an explicit
presentation of a superrigid lattice, one could easily write down an explicit presentation of a group of the form $\Lambda_1$.

The first published examples of nonlinear hyperbolic groups are due to M.\ Kapovich \cite{MKap}. Gromov \cite{Gromov}
used small cancellation theory to
show that suitable quotients of a lattice $\Gam$ as above are infinite hyperbolic groups (see also \cite{champetier,Delzant,olshanskii}),
and then Kapovich used superrigidity results to show that any linear representation of these quotients has finite image.
In particular, these examples have Property (T), since they are quotients of Property (T) groups. It follows that these groups are
not large and hence are not abstractly commensurable with our examples.

The paper is organized as follows. In \S \ref{sec:Construction1}, we give the details of our constructions and show that our
groups have the claimed group-theoretic properties.
In \S \ref{sec:Superrigidity} we recall the necessary consequences of superrigidity for lattices in $\Sp(m,1)$, $m \ge 2$, or $\mathrm{F}_4^{(-20)}$. 
The proofs of nonlinearity are given in \S \ref{proofs}.

\medskip\noindent
{\bf Acknowledgments} The authors are grateful to David Fisher for conversations about superrigidity, to Daniel Groves, Jason Manning, and Henry Wilton for conversations about hyperbolic groups, and to Jack Button for comments on an early draft of the manuscript.
Canary and Tsouvalas were partially supported by NSF grants DMS-1306992 and DMS-1564362. Stover was partially supported by the National Science Foundation under Grant Number NSF 1361000 and Grant Number 523197 from the Simons Foundation/SFARI. The authors acknowledge support from U.S. National Science Foundation grants DMS 1107452, 1107263, 1107367 ``RNMS: GEometric structures And Representation varieties'' (the GEAR Network).

%%%%%%%%%%%%%%%%%%%%
\section{The constructions}\label{sec:Construction1}
%%%%%%%%%%%%%%%%%%%%

In this section, we give the details of the constructions described in the introduction and establish the group-theoretic
properties claimed there. Throughout this paper $G$ will be either $\Sp(m, 1)$ for $m \ge 2$ or $\mathrm{F}_4^{(-20)}$, so $G$ acts by isometries on a rank one symmetric space $X$, which is quaternionic hyperbolic \hbox{$m$-space} or the Cayley hyperbolic plane, respectively. Then $\Gamma$ will always denote a torsion-free cocompact lattice in $G$. In particular, $\Gamma$ is hyperbolic, admits a finite $K(\pi,1)$, 
$H^1(\Gamma,\mathbb R)=0$, and the cohomological dimension of
$\Gamma$ is the dimension of $X$.

\medskip

We first construct the examples with nontrivial first betti number.
If $n\ge 2$, let $\{\gamma_1,\ldots,\gamma_{2n}\}$  be primitive elements of $\Gamma$ with
distinct translation lengths.
The associated geodesics in $X/\Gamma$ are distinct, so
no nontrivial power of $\gamma_i$ is conjugate to a power of $\gamma_j$ for $i\ne j$.
We define
\[
\Lambda_n=\langle \Gamma,t_1,\ldots,t_n\ |\ t_i\gamma_{i}t_i^{-1}=\gamma_{i+n}\rangle.
\]
to be obtained by repeated HNN extensions.

In order to construct examples which are large and have betti number zero and one, we 
observe that $\Gamma$ contains a free, quasiconvex, malnormal subgroup $\Delta$ of rank two so that
$\Delta$ is contained in a finite index, normal subgroup $N$ of $\Gamma$ of index at least three.
We first note that, since $\Gamma$ is residually finite, it contains a finite index, normal subgroup $N$
of index at least three. I. Kapovich \cite[Thm 6.7]{kapovich-nonqc} showed that every non-elementary hyperbolic group 
contains a  malnormal quasiconvex subgroup  which is free of rank two. Let $F$ be a  free  malnormal quasiconvex subgroup of $\Gamma$ of rank two
and let $D$ be a subgroup of $F\cap N$ which is free of rank two. Since every finitely generated subgroup of  a free group is quasiconvex 
and $F$ is quasiconvex in $\Gamma$, we see that $D$  is quasiconvex in $\Gamma$. Kapovich's proof actually first constructs a free 
quasiconvex subgroup  of rank two and then shows that  this subgroup contains a free subgroup of rank two which is malnormal in the entire group. 
Therefore, $D$, and hence $N$, contains a  subgroup $\Delta$ which is free of rank two and malnormal and quasiconvex in $\Gamma$.

Let $\gamma_1$ and $\gamma_2$  be generators of $\Delta$ with distinct translation length.
Since $\Delta$ is malnormal in $\Gamma$, no nontrivial power of $\gamma_1$ is conjugate to a power of $\gamma_2$.
Let $\Lambda_1$ be the HNN extension of $\Gamma$ given by
$$\Lambda_1=\langle \Gamma,t\ |\ t\gamma_1t^{-1}=\gamma_2\rangle.$$
(If we do not require $\Lambda_1$ to be large, it would suffice to choose $\gamma_1$ and $\gamma_2$ to be
primitive elements with distinct translation length as in the construction of $\Lambda_n$ when $n\ge 2$.)

\medskip

We now construct the examples with trivial first betti number.
Let $\alpha$ and $\beta$ generate $\Delta$ and let 
$\phi:\Delta\to\Delta$ be an isomorphism such that the ratio of the
translation lengths of $\alpha$ and $\beta$ is different than the ratio of the translation lengths of $\phi(\alpha)$
and $\phi(\beta)$. 
We define
\[
\Lam_0 = \Gam *_{\phi} \Gam.
\]
to be obtained 
from two copies of $\Gamma$ by identifying $\Del$ in the first copy with $\Del$ in the second copy via
the isomorphism $\phi$. (If we do not require that $\Lam_0$ is large, it would suffice to choose $\Delta$ to be
the malnormal, quasiconvex subgroup of $\Gamma$ guaranteed by Kapovich \cite{kapovich-nonqc}.)

\begin{prop}
For all $n$, a group $\Lambda_n$ constructed as above is hyperbolic, torsion-free, large, one-ended, has
a finite $K(\pi,1)$, has first betti number $n$, and its cohomological dimension is the dimension of $X$. Moreover,
if $n\ge 1$,  $\Lambda_n$ admits a surjective homomorphism to the free group $F_n$ of rank $n$.
\end{prop}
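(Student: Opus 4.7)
The plan is to exploit the fact that each $\Lambda_n$ is the fundamental group of a finite graph of groups with vertex group(s) $\Gamma$ and quasiconvex edge groups---either infinite cyclic (generated by some $\gamma_i$) for the HNN constructions, or the rank-two free group $\Delta$ for the amalgamated product. Most of the listed properties descend from $\Gamma$ through this graph-of-groups structure by routine Bass--Serre arguments; hyperbolicity and largeness are the two points requiring more input.

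I would first handle the easy properties. Torsion-freeness is automatic, since any torsion element in the fundamental group of a graph of torsion-free groups is conjugate into a vertex group. A finite $K(\Lambda_n,1)$ is obtained by the standard graph-of-spaces construction, gluing a finite $K(\Gamma,1)$ to mapping cylinders over $S^1$ or $S^1\vee S^1$; this same model yields $\mathrm{cd}(\Lambda_n)\le\max(\dim X,2)=\dim X$, while the reverse inequality is clear from $\Gamma\hookrightarrow\Lambda_n$. One-endedness follows from the standard fact that the fundamental group of a graph of groups with one-ended vertex groups and infinite edge groups is one-ended: by Stallings' theorem, any splitting of $\Lambda_n$ over a finite subgroup would force, via the Bass--Serre tree, a finite splitting of $\Gamma$, contradicting the fact that $\Gamma$ is one-ended (as a cocompact lattice on the one-ended symmetric space $X$).

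Next I would compute the first Betti number by running the Mayer--Vietoris sequence for the graph-of-groups decomposition, inductively exploiting the hypothesis $H^1(\Gamma;\R)=0$. At each HNN step along a cyclic edge group, the connecting map $H_0(\mathrm{edge};\R)\to H_0(\mathrm{vertex};\R)$ is $i_*-j_*=0$, and each $\gamma_i$ maps trivially to the abelianization of $\Gamma$ (hence, inductively, of each intermediate HNN group); together these force exactly one new $\R$ summand per HNN step. This gives $b_1(\Lambda_n)=n$ for $n\ge 1$, and the analogous computation for $\Lambda_0=\Gamma*_\Delta\Gamma$ yields $b_1=0$. The surjection $\Lambda_n\twoheadrightarrow F_n$ for $n\ge 1$ is then obtained by killing $\Gamma$ and sending each stable letter $t_i$ to a free generator; each HNN relation $t_i\gamma_it_i^{-1}=\gamma_{i+n}$ becomes trivial in $F_n$.

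The two more substantive points are hyperbolicity and largeness, and I expect hyperbolicity to be the main technical step. For this I would apply the Bestvina--Feighn combination theorem \cite{BF}: the edge groups are quasiconvex in $\Gamma$ (automatic for infinite cyclic subgroups of a hyperbolic group, and built into the construction of $\Delta$), and the ``flare'' hypothesis needed to exclude flat pieces is ensured by our choices---no nontrivial power of $\gamma_i$ is conjugate in $\Gamma$ to a power of $\gamma_j$ for $i\ne j$ (coming from distinct translation lengths when $n\ge 2$, and from malnormality of $\Delta$ when $n\le 1$), and $\Delta$ itself is malnormal and quasiconvex in $\Gamma$ by construction. Largeness for $n\ge 2$ is immediate from the surjection $\Lambda_n\twoheadrightarrow F_n\supset F_2$. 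For $n\in\{0,1\}$ I would exploit the normal subgroup $N\trianglelefteq\Gamma$ with $[\Gamma:N]\ge 3$ containing $\Delta$ (and hence all the $\gamma_i$): under $\Gamma\twoheadrightarrow\Gamma/N$ the HNN relation in $\Lambda_1$ (resp.\ the amalgam identification in $\Lambda_0$) becomes trivial, producing surjections $\Lambda_1\twoheadrightarrow(\Gamma/N)*\Z$ and $\Lambda_0\twoheadrightarrow(\Gamma/N)*(\Gamma/N)$. Each target is a nontrivial free product of finite groups, hence virtually a free group of rank at least two, and therefore large; largeness then pulls back through the surjection.
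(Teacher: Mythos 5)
Your proposal is correct and follows essentially the same route as the paper: standard Bass--Serre/graph-of-groups facts for torsion-freeness, one-endedness, the finite $K(\pi,1)$, cohomological dimension, and $b_1$; the Bestvina--Feighn combination theorem (via quasiconvexity, malnormality of $\Delta$, and the non-conjugacy of powers of the $\gamma_i$) for hyperbolicity; and the quotients $\Gamma/N * \Z$ and $(\Gamma/N)*(\Gamma/N)$ for largeness when $n\le 1$. The only step the paper makes explicit that you elide is that, when iterating the HNN construction for $n\ge 2$, the condition that no nontrivial power of $\gamma_i$ is conjugate to a power of $\gamma_{i+n}$ must hold in each intermediate group $\Lambda_{k}$, not just in $\Gamma$; this follows from the normal form for words in HNN extensions.
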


\begin{proof}
That $\Lambda_n$ is torsion-free, one-ended, has
a finite $K(\pi,1)$, has first betti number $n$, and has cohomological dimension 
equal to the dimension of $X$ follows from standard facts about graphs of groups
(see, for example, Serre \cite[Chap.\ 1]{Serre} or Scott--Wall \cite{scott-wall}).
If $n\ge 1$, then $\Lambda_n$ clearly surjects onto the group freely generated by $\{t_1,\ldots, t_n\}$.
The fact that each $\Lambda_n$ is hyperbolic is a special case of the Bestvina--Feighn combination theorem \cite{BF}, 
which is explicitly stated in I.\ Kapovich \cite[Ex.\ 1.3]{IKap} as follows:

\begin{thm}{\ }
\begin{enumerate}
\item
If $A$ and $B$ are hyperbolic groups and $C$ is a quasiconvex subgroup of both $A$ and $B$ that is
malnormal in either $A$ or $B$, then $A*_CB$ is hyperbolic.
\item
If $A$ is a hyperbolic group and $a_1$ and $a_2$ are elements of $A$ so that no nontrivial power of $a_1$ is conjugate to a power of $a_2$, the HNN extension
\[
\langle A,t\ |\ t a_1t^{-1}=a_2\rangle
\]
is hyperbolic.
\end{enumerate}
\end{thm}

\noindent
Part (1) immediately implies that $\Lambda_0$ is hyperbolic, while part (2) gives that $\Lambda_n$ is hyperbolic
if $n\ge 1$. Also, notice that normal form for words in the HNN extension $\Lambda_{n-1}$ (see \cite[\S I.5]{Serre}) implies we still have that that no power of $\gamma_{n}$ is conjugate to a power of $\gamma_{2n}$ in $\Lambda_{n-1}$.

\medskip

We remarked above that $\Lambda_n$ is large for $n \ge 2$, so it remains to prove that $\Lambda_1$ and $\Lambda_0$ are also large.
%To complete the proof of the proposition, it remains to show that each $\Lambda_n$ is large.
%If $n\ge 2$, then $\Lambda_n$ clearly surjects onto the group freely generated by $t_1$ and $t_2$, so is large.
Suppose that $n=1$.  There is a surjective homomorphism
\[
p_1:\Lambda_1 \to H_1=\Gamma/N*\Z
\]
given by projecting $\Gamma$ onto $\Gamma/N$ and taking $t$ to the generator of $\Z$.
Let $J$ be a finite index subgroup of $H_1$ which is isomorphic to a free group of rank at least two, which exists, since $\Gamma/N$ has order at least three.
Then $p_1^{-1}(J)$ is a finite index subgroup of  $\Lam_1$ and
$p_1$ restricts to a surjection of $p_1^{-1}(J)$ onto $J$, so $\Lam_1$ is large.
%However, since $\pi(\gamma_2)$ is not a power of $\pi(\gamma_1)$ and $\pi(\gamma_2)$ is not a power of $\pi(\gamma_1)$, $H_1$ contains a finite index subgroup isomorphic to $F_r$ for some $r\ge 2$, so $\Lambda_1$ is large. In fact, the kernel of the obvious map from $H_1$ to $K_1\oplus K_2$ is free of rank at least two (see \cite[\S I.4.3]{Serre}).

\medskip

We now consider $\Lam_0$. There exists a surjective homomorphism
\[
p_0:\Lambda_0 \to H_0=\Gamma/N*\Gamma/N
\]
given by projecting the first factor of $\Lambda_0$ to the first factor of $H_0$ and the second factor of
$\Lambda_0$ to the second factor of $H_0$. Notice that this is well-defined since $\Delta$ has trivial image in both factors.
As above, $H_0$ contains a finite index subgroup which is isomorphic to a free group of rank at least two, so
$\Lam_0$ is large.
\end{proof}
 %%%%%%%%%%%%%%%%%%%%
 
%%%%%%%%%%%%%%%%%%%%
\noindent
{\bf Remarks:} 1) Kapovich \cite{kapovich-nonqc} further uses a malnormal quasiconvex free subgroup of a word hyperbolic group $G$ to 
construct a hyperbolic group $G^*$ which contains
$G$ as a non-quasiconvex subgroup. We note that $G^*$ is a quotient of a group of the form $\Lambda_2$,  obtained by identifying the
two stable letters,
so if $G$ is a superrigid rank one lattice then $G^*$ can be chosen to be nonlinear.

2) We expect that the techniques of Belegradek--Osin \cite{belegradek-osin}, which
also begin with quotients of superrigid lattices and employ more powerful small cancellation theoretic results,
also produce large, one-ended, nonlinear hyperbolic groups (in particular, see \cite[Thm.\ 3.1]{belegradek-osin}).

3) It is clear that one can construct infinitely many isomorphism classes of groups of the form $\Lambda_n$, for each $n$,
even if one begins with a fixed superrigid lattice $\Gamma$. For example, if $n\ge 1$, it follows readily from the JSJ
theory for hyperbolic groups, see Sela \cite{sela}, that the isomorphism type of  a group of the form 
$\Lambda_1$ is determined, up to finite ambiguity, by the conjugacy class of the pair $\{\gamma_1,\gamma_2\}$ in $\Gamma$.

%%%%%%%%%%%%%%%%%%%%

%%%%%%%%%%%%%%%%%%%%
\section{Superrigidity}\label{sec:Superrigidity}
%%%%%%%%%%%%%%%%%%%%

In this section, we record a version of the superrigidity theorem of Corlette \cite{Corlette} and 
Gromov--Schoen \cite{Gromov--Schoen} that is crafted for our purposes. In our statement
$Y_n$ will denote the symmetric space
\[
Y_n = Z \Or(n) \backslash \GL_n(\R) = \PO(n)\backslash\PGL_n(\R)
\]
associated with $\GL_n(\mathbb R)$, where $Z$ denotes the center of $\GL_n(\R)$.

%%%%%%%%%%%%%%%%%%%%
\begin{thm}\label{thm:Superrigidity1}
Suppose that $\Gam$ is a lattice in $G$, where $G$ is either $Sp(m,1)$ or $\mathrm{F}_4^{(-20)}$, $F$ is a field of characteristic zero, and $\rho : \Gam \to \GL_d(F)$ is a representation with infinite image.
\begin{enumerate}

\item There exists a faithful representation
\hbox{$\tau:\GL_d(F)\to \GL_n(\R)$} for some $n$ such that $\tau\circ\rho(\Gamma)$ has noncompact Zariski closure.

\item If $F=\mathbb R$ and $\rho(\Gamma)$ has noncompact Zariski closure in $\GL_d(\R)$, 
then there exists a $\rho$-equivariant totally geodesic map
\[
f_\rho : X \to Y_d,
\]
where $X=K\backslash G$ is the symmetric space associated with $G$.

\end{enumerate}
\end{thm}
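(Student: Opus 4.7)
The plan is to establish part (1) by a reduction argument using Property (T) of $\Gamma$, and to derive part (2) directly from the Corlette--Gromov--Schoen superrigidity theorem combined with its associated Bochner-type vanishing theorem.

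For part (1), first replace $F$ with the subfield $E\subset F$ generated over $\Q$ by the matrix entries of a finite generating set for $\rho(\Gamma)$; then $E$ is a finitely generated field of characteristic zero and $\rho(\Gamma)\subset\GL_d(E)$. A standard fact about finitely generated linear groups asserts that an infinite finitely generated subgroup of $\GL_d(E)$ must have unbounded image in some completion $E_v$, since otherwise it would be contained in the $\GL_d$-points of a finitely generated subring of $E$ and would therefore be finite. Since $\Gamma$ has Kazhdan's Property (T), every action of $\Gamma$ on the Bruhat--Tits building of $\GL_d$ over a non-archimedean local field fixes a vertex, so $\rho(\Gamma)$ is bounded at every non-archimedean place. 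Consequently $v$ must be archimedean and $E_v$ embeds in $\C$. Composing $\GL_d(F)\to\GL_d(E_v)\hookrightarrow\GL_d(\C)$ with the standard embedding $\GL_d(\C)\hookrightarrow\GL_{2d}(\R)$ yields a faithful $\tau:\GL_d(F)\to\GL_n(\R)$ for which $\tau\circ\rho(\Gamma)$ is unbounded, and hence has noncompact Zariski closure.

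For part (2), the assumption that $\rho(\Gamma)$ has noncompact Zariski closure in $\GL_d(\R)$ ensures, after projecting to $\PGL_d(\R)$ and passing if necessary to a semisimple quotient of the Zariski closure, that $\rho(\Gamma)$ neither fixes a point in $Y_d$ nor is contained in a proper parabolic subgroup. Combined with cocompactness of $\Gamma$, this places us in the setting in which the Corlette--Gromov--Schoen existence theorem produces a $\rho$-equivariant harmonic map $f_\rho:X\to Y_d$. The Bochner-type vanishing identities proved by Corlette for quaternionic hyperbolic $m$-space and extended by Gromov--Schoen to the Cayley hyperbolic plane then force the differential of any equivariant harmonic map out of $X$ to be parallel, so $f_\rho$ is in fact totally geodesic.

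The main obstacle is the place-selection step in part (1): both the boundedness-implies-finiteness lemma for finitely generated linear groups and the Property (T) obstruction to non-archimedean unboundedness are standard ingredients, but some care is required in combining them and in performing the reduction from $F$ to the finitely generated subfield $E$. Part (2), by contrast, is essentially a direct invocation of the Corlette--Gromov--Schoen theorem together with its vanishing theorem, with the only point of care being to ensure that the harmonic map lands in $Y_d$ rather than in the symmetric space of a proper reductive subgroup.
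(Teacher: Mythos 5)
There is a genuine gap in your part (1): the assertion that Property (T) forces every action of $\Gamma$ on the Bruhat--Tits building of $\GL_d$ over a non-archimedean local field to fix a vertex is false once $d\ge 3$. Property (T) gives fixed points on trees (Serre's property FA), i.e.\ for rank one buildings, but not for higher-rank affine buildings: for example, $\SL_3(\Z[1/p])$ and cocompact lattices in $\SL_3(\Q_p)$ are Kazhdan groups acting on the $\tilde A_2$ building of $\SL_3(\Q_p)$ with unbounded orbits. The non-archimedean boundedness you need is not a consequence of Property (T) at all; it is precisely the content of Gromov--Schoen's $p$-adic superrigidity theorem for lattices in $\Sp(m,1)$ and $\mathrm{F}_4^{(-20)}$, which is what the paper invokes at this step. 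Relatedly, the paper uses Property (T) for a different purpose: local rigidity lets one conjugate $\rho(\Gamma)$ into $\GL_n(k)$ for a \emph{number field} $k$, so that the relevant completions are genuinely local fields to which Gromov--Schoen applies. If you work directly with an arbitrary finitely generated field $E$, its completions at ``function-field'' places need not be local fields, so both the place-selection lemma and the superrigidity input require this preliminary reduction.

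Your part (2) takes a different and in principle viable route from the paper's: you invoke the Corlette--Gromov--Schoen existence theorem for equivariant harmonic maps plus the Bochner vanishing argument directly, whereas the paper cites the Fisher--Hitchman packaging (Theorem 3.7 of [FH]), which factors $\rho$ as $\rho_1\rho_2$ with $\rho_1$ extending to a continuous homomorphism $\hat\rho_1:G'\to\GL_{n_1}(\R)$ and $\rho_2$ bounded, and then reads off the totally geodesic map from $\hat\rho_1$ via Cartan's theorem on homomorphisms of semisimple groups. Your version needs more care on the reductivity hypothesis: having noncompact Zariski closure does not by itself prevent $\rho(\Gamma)$ from lying in a proper parabolic (consider unbounded groups of upper triangular matrices), and Corlette's existence theorem for the harmonic map requires a reductive image; the standard fix is to pass to the semisimplification of $\rho$, but then the resulting map is equivariant for the semisimplified representation rather than for $\rho$ itself, which must be reconciled with the statement as given. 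These points are handled implicitly by the Fisher--Hitchman formulation the paper relies on.
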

%%%%%%%%%%%%%%%%%%%%

%%%%%%%%%%%%%%%%%%%%
\begin{pf}
Since $\Gamma$ is finitely generated we may assume that F is isomorphic to a subfield of $\mathbb{C}$. Moreover, $\GL_d(\C)$ is a subgroup of $\GL_{2d}(\R)$. It follows that there exists an injective representation \hbox{$\eta:\GL_d(F)\to \GL_n(\R)$} for some $n$, so we may assume that the original representation maps into $\GL_n(\R)$.

Fisher and Hitchman \cite[Thm.\ 3.7]{FH} then observe that the existing results on superrigidity imply that one
can factor $\rho$ as two representations
\[
\rho_i : \Gam \to \GL_{n_i}(\R) \subseteq \GL_n(\R)
\]
such that:
\begin{enumerate}

\item When $\rho_1$ is nontrivial, there is a group $G^\prime$ locally isomorphic to $G$, a continuous representation $\hat{\rho}_1 : G^\prime \to \GL_{n_1}(\R)$, and an embedding $\iota : \Gam \hookrightarrow G^\prime$ of $\Gam$ as a lattice in $G^\prime$ such that $\rho_1 = \hat{\rho}_1 \circ \iota$.

\item The image of $\rho_2$ is bounded, i.e., has compact Zariski closure.

\item The groups $\rho_1(\Gam)$ and $\rho_2(\Gam)$ commute and $\rho(\gam) = \rho_1(\gam) \rho_2(\gam)$ for all $\gam \in \Gam$.

\end{enumerate}
If $\rho_1$ is nontrivial, the continuous embedding $\hat{\rho}_1 : G^\prime \to \GL_{n_1}(\R)$ determines a totally geodesic embedding of $X$ into $Y_{n_1}$, hence into $Y_n$. Since $\rho_1$ and $\rho_2$ commute, this is a $\rho$-equivariant map.

\medskip

When $\rho_1$ is trivial, we follow arguments in the proof of \cite[Thm.\ 8.1]{MKap}. Note that our use of \cite[Thm.\ 3.7]{FH} allows us to know beforehand that the solvable radical considered in \cite{MKap} is trivial. 
As in \cite{MKap}, the fact that $\Gamma$ has Property (T) allows us to conclude that we may conjugate $\rho$
so that $\rho(\Gam) \subseteq \GL_n(k)$ for some number field $k$. Given an element $\sig \in \mathrm{Aut}(k / \Q)$, we can choose an extension of $\sig$ to an element of $\mathrm{Aut}(\C/\Q)$, which we continue to denote by $\sig$. Applying $\sig$ to matrix entries induces an embedding $\tau_\sigma : \GL_n(F) \to \GL_n(\C)$.

Following the adelic argument in \cite{MKap}, if $\rho(\Gamma)$ were bounded for every valuation of $k$ then $\rho(\Gamma)$ would be finite, which is a contradiction. Moreover, $\rho(\Gamma)$ must be bounded for every nonarchimedean valuation by nonarchimedean superrigidity \cite{Gromov--Schoen}. 
Consequently, there exists $\sig \in \mathrm{Aut}(k / \Q)$ such that 
$\tau_\sigma(\rho(\Gam))$ has noncompact Zariski closure in $\GL_n(\R)$ or $\GL_{2n}(\R)$, according to whether $\sig(k) \otimes_\sigma \R$ is $\R$ or $\C$.
Applying the previous argument to $\tau_\sigma\circ \rho$, there is a $(\tau_\sigma\circ\rho)$-equivariant totally geodesic embedding of $X$ into $Y_n$ or $Y_{2n}$, accordingly. This completes the sketch of the proof.
\end{pf}
%%%%%%%%%%%%%%%%%%%%

M.\ Kapovich \cite{MKap} also points out that superrigidity rules out faithful representations of $\Gamma$ into linear groups of fields of positive characteristic. 
Briefly, one shows that the image of $\rho$ lies in $\GL_n(k)$ where $k$ is a finite extension of $\mathbb{F}_p(x_1, \dots, x_n)$. Then, applying Gromov--Schoen superrigidity 
\cite{Gromov--Schoen} to each valuation of $k$ associated with some $x_i^{\pm 1}$, one sees that $\rho(\Gamma)$ is bounded in each field associated with such a valuation on $k$, as all valuations on $k$ are nonarchimedean. It follows that $\rho(\Gamma)$ is bounded and hence finite. Thus we have:

%%%%%%%%%%%%%%%%%%%%
\begin{prop}\label{lem:Nop}
If $\Gam$ is a lattice in either $Sp(m,1)$ or $\mathrm{F}_4^{(-20)}$ and $F$ is a field of characteristic $p>0$, then there does not exist a faithful representation of $\Gam$ into $\GL_n(F)$ for any $n$.
\end{prop}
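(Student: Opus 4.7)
The plan is to argue by contradiction, following the strategy outlined in the paragraph preceding the statement and adapted from M.\ Kapovich's argument in \cite[\S 8]{MKap}. Assume for contradiction that $\rho : \Gam \to \GL_n(F)$ is faithful. First I would reduce to the case that $F$ is finitely generated over $\mathbb{F}_p$: since $\Gam$ is finitely generated, I can replace $F$ by the subfield generated by the matrix entries of $\rho$ on a finite generating set of $\Gam$. Choosing a transcendence basis, $F$ then becomes a finite extension $k$ of some rational function field $\mathbb{F}_p(x_1, \ldots, x_r)$.

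Next I would invoke the nonarchimedean case of the Gromov--Schoen superrigidity theorem \cite{Gromov--Schoen}. The crucial observation is that in characteristic $p>0$ there are no archimedean valuations on $k$, so for every valuation $v$ of $k$ the completion $k_v$ is a nonarchimedean local field, and $\GL_n(k_v)$ acts on its associated Bruhat--Tits building. Gromov--Schoen's theorem, together with property (T) of $\Gam$ (inherited from $G$), forces the composition $\Gam \to \GL_n(k_v)$ to fix a point of this building, equivalently to have image contained in a compact subgroup of $\GL_n(k_v)$, for every such~$v$.

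The hard part is converting these local boundedness statements into a global finiteness statement for $\rho(\Gam)$. Here I would follow the adelic-style argument of \cite[\S 8]{MKap}, applied to the finitely many valuations of $k$ associated with the transcendence generators $x_1^{\pm 1}, \ldots, x_r^{\pm 1}$: after a suitable simultaneous conjugation, $\rho(\Gam)$ is contained in $\GL_n(\mathcal{O}_v)$ at each such $v$, which forces the matrix entries of $\rho(\Gam)$ to lie in the algebraic closure of $\mathbb{F}_p$ inside $k$ --- a finite field $\mathbb{F}_q$. Consequently $\rho(\Gam) \subseteq \GL_n(\mathbb{F}_q)$, a finite group, contradicting the faithfulness of $\rho$ on the infinite lattice $\Gam$. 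The main technical obstacle is precisely the simultaneous conjugation step, which is where property (T) of $\Gam$ is essential, and for the details I would cite \cite[\S 8]{MKap}.
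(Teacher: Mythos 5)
Your proposal follows essentially the same route as the paper: reduce to a finitely generated field $k$ of characteristic $p$, observe that every valuation of $k$ is nonarchimedean, invoke Property (T) and Gromov--Schoen nonarchimedean superrigidity to bound $\rho(\Gam)$ at the relevant valuations, and conclude that $\rho(\Gam)$ is finite, contradicting faithfulness --- exactly the sketch the paper gives, likewise deferring to \cite[\S 8]{MKap} for details. One small caution: integrality at only the finitely many valuations attached to $x_1^{\pm 1},\dots,x_r^{\pm 1}$ does not by itself force the matrix entries into $\overline{\mathbb{F}}_p$ (for instance $1/(x+1)\in\mathbb{F}_p(x)$ is integral at both $v_x$ and $v_{1/x}$ yet transcendental over $\mathbb{F}_p$), so the finiteness step really requires boundedness at \emph{every} valuation --- equivalently, that an infinite finitely generated linear group in characteristic $p$ must be unbounded at some nonarchimedean place, to which Gromov--Schoen is then applied --- but this looseness is shared with the paper's own sketch and is resolved in the cited reference.
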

%%%%%%%%%%%%%%%%%%%%

%%%%%%%%%%%%%%%%%%%%
\section{Proofs of nonlinearity}\label{proofs}
%%%%%%%%%%%%%%%%%%%%

\noindent To complete the proof of Theorem \ref{mainthm} it remains to prove:

%%%%%%%%%%%%%%%%%%%%
\begin{thm}
Groups of the form $\Lam_n$  constructed in Section \ref{sec:Construction1} are nonlinear.
\end{thm}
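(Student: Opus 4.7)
The plan is to derive a contradiction from a faithful representation $\rho : \Lam_n \to \GL_d(F)$. By Proposition \ref{lem:Nop} we may assume $F$ has characteristic zero. The strategy is to restrict $\rho$ to the embedded copies of $\Gam$ in $\Lam_n$ and apply Theorem \ref{thm:Superrigidity1}. First I would apply Theorem \ref{thm:Superrigidity1}(1) to $\rho|_\Gam$, whose image is infinite because $\Gam$ is infinite and $\rho$ is injective, to obtain a faithful $\tau : \GL_d(F) \to \GL_N(\R)$ so that $\tau\circ\rho(\Gam)$ has noncompact Zariski closure. Theorem \ref{thm:Superrigidity1}(2) then supplies a $(\tau\circ\rho|_\Gam)$-equivariant totally geodesic map $f : X \to Y_N$.

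The key analytic input is that translation lengths scale: there exists a constant $c > 0$ with
\[
\ell_{Y_N}(\tau\rho(\gam)) = c \cdot \ell_X(\gam) \quad \text{for all } \gam \in \Gam,
\]
where $\ell_X$ and $\ell_{Y_N}$ denote translation lengths. Indeed, by the proof of Theorem \ref{thm:Superrigidity1} the map $f$ is induced by a continuous Lie group homomorphism from a group locally isomorphic to $G$, so $f$ pulls back the $Y_N$-metric to a $G$-invariant Riemannian metric on $X$; since $X$ is an irreducible rank one symmetric space, its $G$-invariant metric is unique up to a positive scalar, and $f$ is therefore a homothety of ratio $c>0$ onto its totally geodesic image.

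For $n \ge 1$, the HNN relation $t_i \gam_i t_i^{-1} = \gam_{i+n}$ forces $\tau\rho(\gam_i)$ and $\tau\rho(\gam_{i+n})$ to be conjugate in $\GL_N(\R)$ and hence to have equal translation length on $Y_N$. The displayed proportionality then gives $\ell_X(\gam_i) = \ell_X(\gam_{i+n})$, contradicting the choice of the $\gam_i$ as elements with pairwise distinct translation lengths on $X$.

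For $\Lam_0$ one further step is needed: the single $\tau$ above must serve both restrictions $\rho_i := \rho \circ j_i$ of $\rho$ to the two copies of $\Gam$. Choose $\tau$ so that $\tau\rho_1(\Gam)$ has noncompact Zariski closure; the scaling relation then yields $\ell_{Y_N}(\tau\rho_1(\del)) = c_1 \ell_X(\del) > 0$ for every nontrivial $\del \in \Del$, so the powers of $\tau\rho_1(\del)$ are unbounded. The amalgamation identity $\rho_1(\del) = \rho_2(\phi(\del))$ then forces $\tau\rho_2(\Del) \subseteq \tau\rho_2(\Gam)$ to be unbounded as well, so it has noncompact Zariski closure. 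Applying Theorem \ref{thm:Superrigidity1}(2) to $\tau\circ\rho_2$ yields a second scaling constant $c_2 > 0$ with $\ell_{Y_N}(\tau\rho_2(\gam)) = c_2 \ell_X(\gam)$. Comparing the two expressions for $\ell_{Y_N}(\tau\rho_1(\del)) = \ell_{Y_N}(\tau\rho_2(\phi(\del)))$ gives $c_1 \ell_X(\del) = c_2 \ell_X(\phi(\del))$ for every $\del \in \Del$, so the ratio $\ell_X(\phi(\del))/\ell_X(\del) = c_1/c_2$ is independent of $\del \in \Del \setminus \{1\}$. Specializing to $\al$ and $\beta$ yields $\ell_X(\al)/\ell_X(\beta) = \ell_X(\phi(\al))/\ell_X(\phi(\beta))$, contradicting the defining property of $\phi$. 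I expect the main technical points to be the translation-length proportionality of the second paragraph (which relies on the uniqueness up to scalar of the $G$-invariant metric on an irreducible rank one symmetric space) and, for $\Lam_0$, the bootstrap showing that a single choice of $\tau$ works for both factors through the shared subgroup $\Del$.
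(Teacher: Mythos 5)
Your proposal is correct and follows essentially the same route as the paper: restrict to the lattice factor(s), invoke Proposition \ref{lem:Nop} to force characteristic zero, use Theorem \ref{thm:Superrigidity1} to produce equivariant totally geodesic (homothetic) embeddings of $X$ into $Y_N$, and derive a contradiction from translation lengths (conjugate stable-letter images for $n\ge 1$; the two scaling constants $c_1,c_2$ and the ratio condition on $\phi$ for $\Lam_0$). The only differences are cosmetic --- you treat $\Lam_n$, $n\ge 2$, directly rather than via an embedded copy of $\Lam_1$, and you spell out the homothety and the noncompactness of the Zariski closure of the second factor's image, both of which the paper asserts more briefly.
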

%%%%%%%%%%%%%%%%%%%%       

%%%%%%%%%%%%%%%%%%%%
\begin{pf}
We begin with a group of the form
\[
\Lambda_1=\langle \Gamma,t\ |\ t\gamma_1t^{-1}=\gamma_2\rangle
\]
constructed in Section \ref{sec:Construction1}, where $\Gamma$ is a cocompact lattice in $G$ and
$G$ is either $Sp(m,1)$ or $\mathrm{F}_4^{(-20)}$. Recall that $X$ is the symmetric space associated with $G$
and that $\gamma_1$ and $\gamma_2$ are assumed to have different translation lengths on $X$.

Suppose that $F$ is a field and $\eta:\Lam_1\to \GL_d(F)$ is a faithful representation. 
Applying Proposition \ref{lem:Nop} to the restriction $\rho=\eta|_\Gamma$ of $\eta$
to $\Gamma$, we conclude that $F$ has characteristic zero. Theorem \ref{thm:Superrigidity1} implies
that there exists a faithful representation \hbox{$\tau_\sigma:\GL_d(F)\to \GL_n(\R)$}, for some $n$
and a $(\tau_\sigma\circ\rho)$-equivariant totally geodesic embedding $f$ of $X$ into $Y_n$, where $Y_n$ is the symmetric space
associated with $\GL_n(\R)$.

Since $\tau_\sigma(\rho(\gamma_1))$ is conjugate to $\tau_\sigma(\rho(\gamma_2))$
in $\tau_\sigma(\eta(\Lambda_1))$, and hence in $\GL_n(\R)$, they have the same translation length on $Y_n$.
However, since $f$ is a ($\tau_\sigma\circ\rho$)-equivariant totally geodesic embedding, this implies that
$\gamma_1$ and $\gamma_2$ have the same translation length in $X$, which is a contradiction, hence $\Lam_1$ is nonlinear. Notice that if $n\ge 2$, then any group of the form $\Lam_n$ constructed  in Section \ref{sec:Construction1}
contains a subgroup of the form $\Lam_1$, so $\Lam_n$ is also nonlinear.

Now suppose we have a group of the form 
\[
\Lam_0=\langle \Gamma_1,\Gamma_2\ |\ \alpha_1=\phi(\alpha)_2,\ \beta_1=\phi(\beta)_2\rangle
\]
where each $\Gamma_i$ is a copy of $\Gamma$, $\Delta=\langle \alpha,\beta\rangle$ is a subgroup of $\Gamma$
freely generated by $\alpha$ and $\beta$,
$\Delta_i$ is the copy of $\Delta$ in $\Gamma_i$ and if $\delta\in\Delta$, then $\delta_i$ is the copy of
$\delta$ in $\Delta_i$. Moreover, $\phi$ is an automorphism of $\Delta$ so that the ratio of
the translation lengths of $\alpha$ and $\beta$ on $X$ differs from the ratio of translation lengths
of $\phi(\alpha)$ and $\phi(\beta)$ on $X$.

Suppose that $F$ is a field and $\eta:\Lam_0\to \GL_d(F)$ is a faithful representation. 
Let $\rho_1=\eta|_{\Gamma_1}$ and $\rho_2=\eta|_{\Gamma_2}$
We again apply Proposition \ref{lem:Nop} to conclude that $F$ has characteristic zero, Theorem \ref{thm:Superrigidity1} implies
that there exists a faithful representation \hbox{$\tau_\sigma:\GL_d(F)\to \GL_n(\R)$}, for some $n$
and a $(\tau_\sigma\circ\rho_1)$-equivariant embedding $f$ of $X$ into $Y_n$, where $Y_n$ is the symmetric space
associated with $\GL_n(\R)$.  Since $\tau_\sigma(\rho_1(\Delta_1))=\tau_\sigma(\rho_2(\Delta_2))$ has noncompact
Zariski closure, Theorem \ref{thm:Superrigidity1} implies
that there exists a 
$(\tau_\sigma\circ\rho_2)$-equivariant embedding $g$ of $X$ into $Y_n$. Notice that $\tau_\sigma(\rho_1(\alpha_1))=\tau_\sigma(\rho_2(\phi(\alpha)_2))$  
and that $\tau_\sigma(\rho_1(\beta_1))=\tau_\sigma(\rho_2(\phi(\beta)_2))$.

Since $f$ and $g$ are equivariant
totally geodesic embeddings, there exist positive constants $c_1$ and $c_2$ so that if $\gamma\in\Gamma$, 
then the ratio of the  translation length of $\tau_\sigma(\rho_i(\gamma_i))$ on $Y_n$ and the translation length of $\gamma$ on $X$ is $c_i$. Indeed, the metrics on $f(X)$ and $g(X)$ differ by a scalar multiple.
It follows that the ratio of the translation lengths of $\alpha$ and $\beta$ on $X$
agrees with the ratio of the translation lengths of $\phi(\alpha)$ and $\phi(\beta)$ on $X$.
However, this contradicts our assumptions, so $\Lam_0$ is nonlinear.
\end{pf}
%%%%%%%%%%%%%%%%%%%%

%%%%%%%%%%%%%%%%%%%%

%%%%%%%%%%%%%%%%%%%%

\begin{thebibliography}{ELMNPM}

\bibitem[BO]{belegradek-osin} M. Belegradek and D. Osin, 
Rips construction and Kazhdan property (T), \textit{Groups, Geom., Dyn.} \textbf{2} (2008) 1--12.


%\bibitem[Ber]{Ber} N. Bergeron, Premier nombre de Betti et spectre du laplacian de certain vari\'et\'es hyperboliques.
%\textit{L'Ensegnement Math.} \textbf{46} (2000), 109--137.

\bibitem[BF]{BF} M. Bestvina and M. Feighn, A combination theorem for negatively curved groups.
\textit{J. Diff. Geom.} \textbf{35} (1992), 85--101.


\bibitem[Ch]{champetier} C. Champetier,  Petite simplification dans les groupes hyperboliques.
{\em Ann. Fac. Sci. Toulouse Math.} {\bf 3}(1994),  161--221.

\bibitem[Cor]{Corlette} K. Corlette, Archimedean superrigidity and hyperbolic geometry.
\textit{Ann. of Math. (2)} \textbf{135} (1992),  165--182. 

\bibitem[D]{Delzant} T. Delzant, Sous-groupes distingu{\'e}s et quotients des groupes hyperboliques.
\textit{Duke Math. J.} \textbf{83} (1996),  661--682.

\bibitem[FH]{FH} D. Fisher and T. Hitchman, Strengthening Kazhdan's Property (T) by Bochner methods. \textit{Geom. Dedicata} \textbf{160} (2012), 333--364.

\bibitem[Gr]{Gromov} M. Gromov, Hyperbolic groups.
\textit{Essays in group theory}, 75--263, Math. Sci. Res. Inst. Publ., \textbf{8}, Springer 1987.

\bibitem[GS]{Gromov--Schoen} M. Gromov and R. Schoen, Harmonic maps into singular spaces and p-adic superrigidity for lattices in groups of rank one.
\textit{Inst. Hautes {\'E}tudes Sci. Publ. Math.} \textbf{76} (1992), 165--246.


\bibitem[K99]{kapovich-nonqc} I. Kapovich,  A non-quasiconvexity embedding theorem for hyperbolic groups,
{\em Math. Proc. Camb. Phil. Soc.}  {\bf 127}(1999), 461--486.

\bibitem[K01]{IKap} I. Kapovich, The combination theorem and quasiconvexity.
\textit{Int. J. Alg. Comp.} \textbf{11} (2001), 185--216.


\bibitem[K05]{MKap} M. Kapovich, Representations of polygons of finite groups.
\textit{Geom. Top.} \textbf{9} (2005), 1915--1951.

%\bibitem[Lo]{Long} D. D. Long, Immersions and embeddings of totally geodesic surfaces.
%\textit{Bull. London Math. Soc.} \textbf{19} (1987), no. 5, 481--484.

%\bibitem[Mey]{Mey} J. Meyer, Totally geodesic spectra of quaternionic hyperbolic orbifolds.
%arXiv:1505.03643.

\bibitem[Ol]{olshanskii} A.Y. Ol'shanskii, On residualing homomorphisms and $G$-subgroups
of hyperbolic groups. {\em Int. J. Alg. Comp.} {\bf 3}(1993), 365--409.

\bibitem[SW]{scott-wall}P. Scott and T. Wall, Topological methods in group theory.
\textit{Homological Group Theory}, L.M.S. Lecture Note Series vol. {\bf 36}, Camb,
Univ. Press, 1979, 137--203.

\bibitem[Sel]{sela} Z. Sela, Structure and rigidity in (Gromov) hyperbolic groups and discrete groups in rank 1 Lie groups. II.
\textit{G.A.F.A.}  {\bf 7}(1997), 561--593.

\bibitem[Ser]{Serre} J.-P. Serre, Trees.
Springer Monographs in Mathematics. Springer-Verlag, 1980.

\end{thebibliography}
\end{document}